\newcommand{\R}{\ensuremath{\mathbb{R}}}
\newcommand{\X}{\ensuremath{\mathbb{X}}}
\newcommand{\A}{\ensuremath{\mathbb{A}}}
\newcommand{\E}{\ensuremath{\mathbb{E}}}
\newcommand{\F}{\ensuremath{\mathbb{F}}}
\newcommand{\B}{\ensuremath{\mathcal{B}}}
\newcommand{\Gr}{\ensuremath{\mbox{\textnormal{Gr}}}}
\renewcommand{\H}{\ensuremath{\mathbb{H}}}
\renewcommand{\P}{\ensuremath{\mathbb{P}}}
\newtheorem{thm}{Theorem}
\newtheorem{prp}[thm]{Proposition}
\newtheorem{lem}[thm]{Lemma}
\newtheorem{cor}[thm]{Corollary}
\newtheorem{remark}[thm]{Remark}
\newenvironment{customthm}[1]
  {\innercustomthm}
  {\endinnercustomthm}
\begin{document}

\title{Reduction of total-cost and average-cost MDPs with weakly
  continuous transition probabilities to discounted MDPs}

\author[sbu]{Eugene A. Feinberg\corref{cor1}}
\ead{eugene.feinberg@stonybrook.edu}

\author[orie]{Jefferson Huang}
\ead{jefferson.huang@cornell.edu}

\cortext[cor1]{Corresponding author}

\address[sbu]{Department of Applied Mathematics and Statistics, Stony Brook University, Stony Brook, NY 11794-3600, USA}

\address[orie]{School of Operations Research and Information Engineering, Cornell University, Ithaca, NY 14853-3801, USA}

\begin{abstract}
  This note describes sufficient conditions under which total-cost and
  average-cost Markov decision processes (MDPs) with general state and
  action spaces, and with weakly continuous transition probabilities,
  can be reduced to discounted MDPs. For undiscounted problems, these reductions
  imply the validity of optimality equations and the existence of
  stationary optimal policies. The reductions also provide methods for
  computing optimal policies. The results are applied to a capacitated
  inventory control problem with fixed costs and lost sales.
\end{abstract}

\begin{keyword}
  Markov decision process \sep reduction \sep total cost \sep average
  cost \sep discounted \sep inventory
\end{keyword}

\maketitle

\section{Introduction}
\label{sec:introduction}

Undiscounted Markov decision processes (MDPs) are typically much more
difficult to study than discounted MDPs. This is true both for models
with expected total costs and for models with average costs per unit
time.  This paper describes conditions under which undiscounted MDPs
with infinite state spaces and weakly continuous transition kernels
can be transformed into discounted MDPs.

For undiscounted total costs, a classic assumption is  
that the expected number of visits to each state in a certain set
$\X^\prime$ is finite under every policy and initial state. Such an
assumption is typically referred to as \emph{transience}
\cite[Chapter~7]{altman_constrained_1999},
\cite{veinott_discrete_1969}. When the expected amount of time spent
in $\X^\prime$ (i.e., the ``lifetime'' of the system) is finite for
every policy and initial state, the MDP is called \emph{absorbing}
\cite[Chapter~7]{altman_constrained_1999}. It is well-known that every
discounted MDP can be viewed as an absorbing MDP with the lifetime of
the system being geometrically distributed
\cite[p. 137]{altman_constrained_1999}. We remark that every absorbing
MDP is transient, and that the two conditions are equivalent when the
set $\X^\prime$ is finite.

For average costs per unit time, a classic approach has been to make
use of results about discounted MDPs. The most general results have
been obtained in \cite{feinberg_average_2012} using the so-called
vanishing discount factor approach, where the validity of optimality
inequalities and existence of stationary optimal policies are obtained
by considering optimality equations for discounted MDPs and letting
the discount factor tend to one. Another approach, which was used
early in the development of the theory of average-cost MDPs, is to
transform the average-cost problem into a discounted one, and argue
that optimal policies for the latter are also optimal for the former
\cite[Chapter 7 \S 10]{dynkin_controlled_1979},
\cite{ross_arbitrary_1968, ross_non-discounted_1968}. One advantage of
this approach is that it can be used to apply methods and algorithms
developed for discounted MDPs to undiscounted
MDPs. \cite{akian_policy_2013, feinberg_strong_2013,
  feinberg_reduction_2017}.

In \cite{feinberg_reduction_2017}, conditions were given under which
undiscounted MDPs with general state and action spaces can be reduced
to discounted ones. These conditions include the assumption that the
transition probabilities are setwise continuous. However, for many
models of interest, such as those arising in inventory control
\cite{feinberg_optimality_2016}, the transition probabilities are only
weakly continuous. In this paper, we provide conditions under which
the reductions in \cite{feinberg_reduction_2017} lead to optimality
results for undiscounted MDPs with weakly continuous transition
kernels. In particular, under these conditions the discounted MDPs to
which the undiscounted MDPs are reduced have weakly continuous
transition probabilities. Moreover, while sufficient conditions are
provided in \cite{costa_average_2012, feinberg_optimality_2017,
  jaskiewicz_optimality_2006} for the validity of the optimality
equations for average-cost MDPs, Assumption~\ref{asmp:ht} in
Section~\ref{sec:average-costs-per} ensures that a solution to this
optimality equation can be obtained via the optimality equation for a
discounted MDP. This in turn implies that such average-cost MDPs can
be solved using methods developed for discounted MDPs.

The rest of the paper is organized as follows. In
Section~\ref{sec:model-description}, the MDP model and objective
functions are described. Next, in Section~\ref{sec:total-costs} the
results for undiscounted total-cost MDPs are
presented. Section~\ref{sec:average-costs-per} contains the results
for average-cost MDPs. Finally, in Section~\ref{sec:example}
we apply the preceding results to a capacitated inventory control
problem with fixed ordering costs and lost sales.

\section{Model description}
\label{sec:model-description}

The \emph{state space} $\X$ and \emph{action space} $\A$ are Borel
subsets of complete separable metric spaces endowed with their
respective Borel $\sigma$-algebras $\B(\X)$ and $\B(\A)$. When the
current state is $x \in \X$, the decision-maker must select an action
from the \emph{set of available actions} $A(x)$, which is a nonempty
Borel subset of $\A$. The space of all feasible state-action pairs
\begin{displaymath}
  \Gr(A) := \{(x, a) | x \in \X, \ a \in A(x)\}
\end{displaymath}
is assumed to be a Borel subset of $\X \times \A$, and to contain the
graph of a Borel-measurable function from $\X$ to $\A$ (these
assumptions follow from Assumption \ref{asmp:w}(i) below). For each $(x, a) \in \Gr(A)$
there is an associated \emph{one-step cost} $c(x, a) \in [0, \infty)$
and a finite measure $q(\cdot | x, a)$ on $(\X, \B(\X))$. We assume that
the functions $(x, a) \mapsto c(x, a)$ and
$(x, a) \mapsto q(B | x, a)$, for each $B \in \B(\X)$, are
Borel-measurable. Moreover, $q$ is assumed to satisfy
\begin{displaymath}
  \sup\left\{ q(\X | x, a) : (x, a) \in \Gr(A)\right\} < \infty.
\end{displaymath}
For possible interpretations of the values $q(B | x, a)$ for
$B \in \B(\X)$, which may be greater than one, see
\cite[Section~2.1]{feinberg_reduction_2017}; in light of these
interpretations, we will refer to $q$ as the \emph{transition kernel}.

\subsection{Objective functions}
\label{sec:optimality-criteria}

Let $\H_0 := \X$, and for $n = 1, 2, \dots$ let
$\H_n := \X \times \A \times \H_{n-1}$ denote the space of all
\emph{histories} of the process up to decision epoch $n$, endowed with
the product $\sigma$-algebra. A \emph{decision rule} for epoch
$n = 0, 1, \dots$ is a mapping
$\pi_n:\B(\A) \times \H_n \rightarrow [0, 1]$ such that for every
$h_n = x_0 a_0 \cdots x_n$ the set function
$\pi_n(\cdot | h_n)$ is a probability measure on
$(\A, \B(\A))$ satisfying $\pi_n(A(x_n) | h_n) = 1$, and for every $B
\in \B(\A)$ the function $\pi_n(B | \cdot)$ on $\H_n$ is
Borel-measurable.

A \emph{policy} is a sequence $\pi = \{\pi_n\}_{n=0}^\infty$ of
decision rules; let $\Pi$ denote the set of all policies. Under a
policy $\pi$, at each decision epoch $n = 0, 1, \dots$ the
decision-maker observes the history
$h_n = x_0 a_0 \cdots x_n \in \H_n$ of the process up to epoch $n$ and
selects an action $a \in A(x_n)$ according to the probability
distribution $\pi_n(\cdot | h_n)$. A \emph{stationary policy} is
identified with a Borel-measurable function $\phi:\X \rightarrow \A$
satisfying $\phi(x) \in A(x)$ for all $x \in \X$; under such a policy,
the decision-maker selects the action $\phi(x)$ if the current state
is $x$. The set of all stationary policies is denoted by $\F$.


To define the objective functions under consideration, for
$B \in \B(\X)$ and $(x, a) \in \Gr(A)$ let
\begin{displaymath}
  p(B | x, a) := q(B | x, a) / q(\X | x, a),
\end{displaymath}
and let
\begin{displaymath}
\alpha(x, a) := q(\X | x, a).
\end{displaymath}
Observe that $p(\cdot | x, a)$
is a probability measure on $(\X, \B(\X))$ for every
$(x, a) \in \Gr(A)$, and that $p(B | \cdot)$ is a Borel function on
$\Gr(A)$ for every $B \in \B(\X)$. Therefore, for every policy
$\pi \in \Pi$ and initial state $x \in \X$ the Ionescu Tulcea theorem
\cite[pp. 140-141]{bertsekas_stochastic_1978} uniquely defines a
probability measure $\P_x^\pi$ on
$((\X \times \A)^\infty, \B[(\X \times\A)^\infty])$ and its associated
expectation operator $\E_x^\pi$.

When the initial state is $x \in \X$, under $\pi \in \Pi$ the
\emph{total cost} incurred is
\begin{displaymath}
  v^\pi(x) := \E_x^\pi\sum_{n=0}^\infty \alpha(x_n, a_n)c(x_n, a_n),
\end{displaymath}
and the \emph{average cost} incurred is
\begin{displaymath}
  w^\pi(x) :=
  \limsup_{N\rightarrow\infty}\frac{1}{N}\E_x^\pi\sum_{n=0}^{N-1}
  \alpha(x_n, a_n)c(x_n, a_n).
\end{displaymath}
A policy $\pi_* \in \Pi$ is \emph{total-cost optimal} if
\begin{displaymath}
  v^{\pi_*}(x) = \inf_{\pi \in \Pi}v^\pi(x) =: v(x) \qquad \forall x
  \in \X,
\end{displaymath}
and is \emph{average-cost optimal} if
\begin{displaymath}
  w^{\pi_*}(x) = \inf_{\pi \in \Pi}w^\pi(x) =: w(x) \qquad \forall x
  \in \X.
\end{displaymath}
If there exists a constant $\beta$ such that $\alpha(x, a) = \beta$ for all
$(x, a) \in \Gr(A)$, a total-cost optimal policy is called \emph{$\beta$-optimal}.

\section{Total costs}
\label{sec:total-costs}

To state Assumption~\ref{asmp:t} below for the total-cost criterion, given $\phi \in
 \F$ and a Borel function $u:\X \rightarrow \R$ let
\begin{displaymath}
  Q_\phi u(x) := \int_\X u(y)q(dy | x, \phi(x)), \qquad x \in \X,
\end{displaymath}
let $Q_\phi^0 u(x) := u(x)$ for $x \in \X$, and for $n = 1, 2, \dots$ let
$Q_\phi^n u(x) := Q_\phi(Q_\phi^{n-1}u)(x)$ for $x \in \X$.
\begin{customthm}{T}\label{asmp:t}
  There exists a continuous function $V:\X \rightarrow [1, \infty)$
  and a constant $K$ satisfying
  \begin{equation}
    \label{eq:asmp-t}
    \sum_{n=0}^\infty Q_\phi^nV(x) \leq KV(x) < \infty, \ \ \forall
    \phi \in \F, \ x \in \X.
  \end{equation}
\end{customthm}

The statement of Assumption~\ref{asmp:w} below requires several
definitions. Let $\mathbb{S}$ and $\mathbb{T}$ be metric spaces
endowed with their respective Borel $\sigma$-algebras $\B(\mathbb{S})$
and $\B(\mathbb{T})$. A set-valued mapping
$s \mapsto \Phi(s) \subseteq \mathbb{T}$ on $\mathbb{S}$ is
\emph{compact-valued} if $\Phi(s)$ is compact for all
$s \in \mathbb{S}$, and is \emph{continuous on} $\mathbb{S}$ if
for every open set $V \subseteq \mathbb{T}$ the sets
$\{s \in \mathbb{S} | \Phi(s) \subseteq V\}$ and
$\{s \in \mathbb{S} | \Phi(s) \cap V \neq \emptyset\}$ are open in
$\mathbb{S}$.

Next, a \emph{transition kernel} from $\mathbb{S}$ to $\mathbb{T}$ is
a mapping
$\kappa: \B(\mathbb{T}) \times \mathbb{S} \rightarrow [0, \infty)$
such that $\kappa(\cdot | s)$ is a finite measure on
$(\mathbb{T}, \B(\mathbb{T}))$ for every $s \in \mathbb{S}$, and
$\kappa(T | \cdot)$ is a Borel function on $\mathbb{S}$ for every
$T \in \B(\mathbb{T})$. A transition kernel $\kappa$ is \emph{weakly
  continuous} if for every bounded continuous function $f:\mathbb{T}
\rightarrow \R$ the mapping
\begin{displaymath}
  s \mapsto \int_{\mathbb{T}}f(t)\kappa(dt | s)
\end{displaymath}
is continuous on $\mathbb{S}$.
If $\kappa$ is a transition kernel such that
$\kappa(\cdot | s)$ is a probability measure for every
$s \in \mathbb{S}$, it is called a \emph{transition probability
  kernel}.

Finally, a function $f:\mathbb{S} \rightarrow \R$ is \emph{lower
  semicontinuous at} $s \in \mathbb{S}$ if
$\liminf_{s^\prime \rightarrow s}f(s^\prime) \geq f(s)$, and is
\emph{lower semicontinuous on} $S \subseteq \mathbb{S}$ if it is lower
semicontinuous at every $s \in
S$. 


\begin{samepage}
  \begin{customthm}{WC}\label{asmp:w} \
    \begin{enumerate}[(i)]
    \item The set-valued mapping $x \mapsto A(x)$ is compact-valued
      and continuous on $\X$.
    \item The transition kernel $q$ is weakly continuous.
    \item The function $(x, a) \mapsto c(x, a)$ is lower
      semicontinuous on $\Gr(A)$.
    \end{enumerate}
  \end{customthm}
\end{samepage}

\begin{prp}\label{prp:sh}
  Suppose Assumptions~\ref{asmp:t} and \ref{asmp:w}(i, ii)
  hold. Then there exists a continuous function
  $\mu:\X \rightarrow [1, \infty)$ satisfying
  $V(x) \leq \mu(x) \leq KV(x)$ for all $x \in \X$ and
  \begin{equation}
    \label{eq:sh}
    \mu(x) \geq V(x) + \int_\X \mu(y) q(dy | x, a)
  \end{equation}
  for all $(x, a) \in \Gr(A)$.
\end{prp}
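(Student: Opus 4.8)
The plan is to realize $\mu$ as the value function of an auxiliary maximization problem driven by the kernel $q$ with one-step reward $V$, built by value iteration. Set $v_0 \equiv 0$ and, for $N \ge 0$,
\begin{equation*}
  v_{N+1}(x) := V(x) + \sup_{a \in A(x)}\int_\X v_N(y)\,q(dy \mid x, a), \qquad x \in \X.
\end{equation*}
Since $v_1 = V \ge v_0$ and the right-hand side is monotone in its argument, $(v_N)$ is nondecreasing, so $\mu(x) := \lim_{N\to\infty}v_N(x)$ exists in $[V(x), \infty]$; in particular $\mu \ge V \ge 1$, which is the lower bound. Moreover, for each fixed $(x,a)\in\Gr(A)$ we have $v_{N+1}(x) \ge V(x) + \int_\X v_N\,dq(\cdot\mid x,a)$, and since $0 \le v_N \uparrow \mu$, monotone convergence lets me pass to the limit under the integral to get $\mu(x) \ge V(x) + \int_\X \mu\,dq(\cdot\mid x,a)$ for all $(x,a)$, which is exactly \eqref{eq:sh}. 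Thus both the lower bound and the desired inequality are immediate once $\mu$ is known to be finite.

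For the upper bound I would argue that $v_N$ cannot exceed the expected total $V$-reward accrued under a single stationary policy, so that Assumption~\ref{asmp:t} applies. The clean way to close this is to use Assumptions~\ref{asmp:w}(i,\,ii): weak continuity of $q$ together with compactness and continuity of $x \mapsto A(x)$ ensures, via Berge's theorem, that the supremum over $a$ in the optimality equation $\mu = V + \sup_{a}\int_\X \mu\,dq$ is attained by a measurable selector $\phi_* \in \F$. Then $\mu = \sum_{n=0}^\infty Q_{\phi_*}^n V \le KV$ by \eqref{eq:asmp-t}, which simultaneously shows $\mu$ is real-valued and bounded above by $KV$. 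I would also record the elementary consequence of \eqref{eq:asmp-t} that $\int_\X V\,dq(\cdot\mid x,a) \le (K-1)V(x)$ for every $(x,a)\in\Gr(A)$: the bound holds for $a = \phi(x)$ for any $\phi\in\F$, and every $(x_0,a_0)\in\Gr(A)$ is of this form since a Borel selector of $A(\cdot)$ may be redefined at the single point $x_0$ to take the value $a_0$.

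It remains to show $\mu$ is continuous, which I would split into lower and upper semicontinuity. Lower semicontinuity propagates through the iteration: if $v_N$ is nonnegative and lower semicontinuous, then approximating it from below by bounded continuous functions and invoking weak continuity of $q$ shows $(x,a)\mapsto \int_\X v_N\,dq(\cdot\mid x,a)$ is lower semicontinuous, while lower semicontinuity of the correspondence $A(\cdot)$ makes $x\mapsto\sup_{a}\int_\X v_N\,dq$ lower semicontinuous. Hence every $v_N$, and therefore the increasing limit $\mu$, is lower semicontinuous.

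The main obstacle is upper semicontinuity, and this is precisely where weak (as opposed to setwise) continuity bites: weak continuity controls integrals only of \emph{bounded} continuous functions, whereas $V$, and hence $\mu$, may be unbounded, so one cannot directly assert that $(x,a)\mapsto\int_\X \mu\,dq$ is upper semicontinuous. To overcome this I would use the sandwich $V \le \mu \le KV$ together with the optimality equation to reduce upper semicontinuity of $\mu$ to that of $(x,a)\mapsto\int_\X \mu\,dq(\cdot\mid x,a)$, and then upgrade the weak-continuity integrals to genuinely continuous ones by exploiting the $V$-moment control $\int_\X V\,dq(\cdot\mid x,a)\le(K-1)V(x)$ from Assumption~\ref{asmp:t}: along a convergent sequence $(x_k,a_k)\to(x,a)$ the measures $q(\cdot\mid x_k,a_k)$ converge weakly and carry uniformly integrable $V$-mass locally, which forces convergence of the integrals of continuous functions dominated by a multiple of $V$. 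Verifying this local uniform-integrability step, and confirming it indeed delivers upper semicontinuity of the lower-semicontinuous function $\mu$, is the delicate part of the argument; combining it with the lower semicontinuity established above yields the continuity of $\mu$.
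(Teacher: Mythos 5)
Your construction of $\mu$ coincides with the paper's: the iteration $v_{N+1}=V+\sup_{a\in A(x)}\int_\X v_N\,dq$ is exactly the paper's operator $\mathcal{U}$, and your derivations of $\mu\ge V$ and of \eqref{eq:sh} via monotone convergence match the paper's. However, two of your steps do not hold up. First, the upper bound $\mu\le KV$. From the fixed-point relation $\mu=V+Q_{\phi_*}\mu$, iteration gives $\mu\ge\sum_{n=0}^{N-1}Q_{\phi_*}^nV+Q_{\phi_*}^N\mu\ge\sum_{n=0}^{N-1}Q_{\phi_*}^nV$, which is a \emph{lower} bound on $\mu$ by the series; the identity $\mu=\sum_{n=0}^\infty Q_{\phi_*}^nV$ that you invoke requires $Q_{\phi_*}^N\mu\to 0$, and that is only available once $\mu\le KV$ (or at least finiteness of $\mu$) is already known. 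Moreover, applying Berge's theorem or a measurable-selection argument to the limit equation presupposes that $\mu$ is finite and sufficiently regular, which has not been established at that point. The bound has to be proved at the level of the finite iterates, $v_N\le KV$ for all $N$, and then passed to the limit; this is what the paper does by appealing to the argument of \cite[Proposition~1]{feinberg_reduction_2017}.

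Second, the upper-semicontinuity half of your continuity argument rests on a claim that is false in general: a uniform bound on the first $V$-moment, $\int_\X V\,dq(\cdot\mid x,a)\le(K-1)V(x)$, does not yield local uniform $V$-integrability along a weakly convergent sequence of measures. Mass can escape in the $V$-scale: for probability measures $\nu_k=(1-\epsilon_k)\delta_{y_0}+\epsilon_k\delta_{y_k}$ with $\epsilon_kV(y_k)=c$ and $V(y_k)\to\infty$ one has $\nu_k\to\delta_{y_0}$ weakly with bounded $V$-moments, yet $\int V\,d\nu_k\to V(y_0)+c\ne V(y_0)$. This is precisely why Theorem~\ref{thm:hv-weak} imposes the continuity of \eqref{eq:cont-V} as a separate hypothesis rather than deducing it from Assumptions~\ref{asmp:t} and~\ref{asmp:w}. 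The paper's proof sidesteps the l.s.c./u.s.c.\ decomposition entirely: using $V\le\mu\le KV$ it shows that $\mathcal{U}$ contracts the $\mu$-weighted (equivalently, $V$-weighted) sup-norm with modulus $(K-1)/K<1$, so the continuous iterates $u_n$ form a Cauchy sequence in the Banach space $C_V(\X)$ and converge uniformly in $V$-norm to $\mu$, which is therefore continuous. You should replace the uniform-integrability sketch with this contraction argument.
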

\begin{proof}
  Consider the operator $\mathcal{U}$ defined for Borel functions
  $u:\X \rightarrow \R$ by
  \begin{displaymath}
    \mathcal{U}u(x) := \sup_{ a\in A(x)}\left[V(x) + \int_\X u(y)q(dy |
      x, a)\right]
  \end{displaymath}
  for $x \in \X$. Let $u_0 \equiv 0$, and for $n = 1, 2, \dots$ let
  $u_n := \mathcal{U}u_{n-1}$. According to the Berge maximum theorem
  (see e.g., \cite[p. 570]{aliprantis_infinite_2006}), for
  $n = 0, 1, \dots$ the function $u_n$ is continuous. Since
  $u_{n+1} \geq u_n \geq V$ pointwise for $n = 1, 2, \dots$, the
  sequence of continuous functions $\{u_n\}_{n=0}^\infty$ converges to
  a Borel function $\mu := \lim_{n\rightarrow\infty}u_n \geq V$. The
  claim that $\mu \leq K\mu$ can be verified using the arguments in
  \cite[Proof of Proposition~1]{feinberg_reduction_2017} and the Berge
  maximum theorem. Moreover, Lebesgue's monotone convergence theorem
  implies that $\mu = \mathcal{U}\mu$, which means (\ref{eq:sh}) holds
  for all $(x, a) \in \Gr(A)$.

  It remains to be shown that the function $\mu:\X \rightarrow \R$
  defined above is continuous. 
  First, observe that for any Borel functions
  $f, g$ on $\X$,
  \begin{displaymath}
    f(x) \leq g(x) + \mu(x)\left(\sup_{x \in \X}\frac{|f(x) -
      g(x)|}{\mu(x)}\right), \ \forall x \in \X,
  \end{displaymath}
  which implies that for all $x \in \X$,
  \begin{displaymath}\footnotesize
    \begin{aligned}
      \mathcal{U}f(x) &\leq \mathcal{U}g(x) + (\mu(x) -
      V(x))\left(\sup_{x \in \X}\frac{|f(x) - g(x)|}{\mu(x)}\right) \\
      &\leq \mathcal{U}g(x) + \mu(x)\left(\frac{K-1}{K}\right) \left(\sup_{x \in \X}\frac{|f(x) -
        g(x)|}{\mu(x)}\right).
    \end{aligned}
  \end{displaymath}
  By reversing the roles of $f$ and $g$, it follows that
  \begin{displaymath}\footnotesize
    \begin{aligned}
      \frac{|\mathcal{U}f(x) - \mathcal{U}g(x)|}{\mu(x)} \leq
      \left(\frac{K-1}{K}\right)\sup_{x \in \X}\frac{|f(x) -
        g(x)|}{\mu(x)}, \ \forall x \in \X.
    \end{aligned}
  \end{displaymath}
  Since $V \leq \mu \leq KV$, it follows that for the sequence
  $\{u_n\}_{n=0}^\infty$ defined above,
  \begin{displaymath}\small
    \begin{aligned}
      \sup_{x \in \X}\frac{|u_{n+1}(x) - u_n(x)|}{KV(x)} \leq
      \left(\frac{K-1}{K}\right)^n, \ n = 0, 1, \dots,
    \end{aligned}
  \end{displaymath}
  which implies that for all nonnegative integers $m,n$ satisfying
  $m > n$,
  \begin{align}
      \sup_{x \in \X}&\frac{|u_m(x) - u_n(x)|}{KV(x)} \notag\\
      &\leq \sum_{k=0}^{m-n-1}\sup_{x \in
          \X}\frac{|u_{n+k+1}(x) - u_{n+k}(x)|}{KV(x)} \notag\\
      &\leq
      \sum_{k=0}^{m-n-1}\left(\frac{K-1}{K}\right)^{n+k} \notag\\
      &\leq
      \left(\frac{K-1}{K}\right)^n\sum_{k=0}^\infty\left(\frac{K-1}{K}\right)^k\notag\\
      &= K\left(\frac{K-1}{K}\right)^n.\label{eq:cauchy}
  \end{align}
  Define the \emph{$V$-norm} for functions $f:\X \rightarrow \R$ by
  $\|f\|_V := \sup_{x \in \X}|f(x)|/V(x)$, and let $C_V(\X)$ denote
  the space of continuous functions on $\X$ with finite $V$-norm. Then
  (\ref{eq:cauchy}) implies that $\{u_n\}_{n = 0}^\infty$ is a Cauchy
  sequence in $C_V(\X)$. Since $C_V(\X)$ is a Banach space with
  respect to $\|\cdot\|_V$, it follows that the sequence
  $\{u_n\}_{n=0}^\infty$ converges to a function in $C_V$. Since
  $\lim_{n\rightarrow\infty}u_n = \mu$, it follows that $\mu \in C_V$;
  in particular, $\mu$ is continuous.
\end{proof}

\subsection{Hoffman-Veinott (HV) transformation}
\label{sec:hoffm-vein-transf}

In this section, we present the HV transformation
\cite{feinberg_reduction_2017}, which is based on ideas due to Alan
Hoffman and A. F. Veinott \cite{veinott_discrete_1969}. A point $s$ is
\emph{isolated} from a metric space $\mathbb{S},$ if there exists an
$\epsilon > 0$ such that the distance between $s$ and any element of
$\mathbb{S}$ is larger than $\epsilon$
. The state space of the new MDP is
$\tilde{\X} := \X \cup \{\tilde{x}\}$, where $\tilde{x} \not\in \X$ is
a cost-free absorbing state that is isolated from $\X$. The
action space is $\tilde{\A} := \A \cup \{\tilde{a}\}$, where
$\tilde{a}$ is the only action available when the current state is
$\tilde{x}$. The set $\tilde{A}(x)$ of available actions is unchanged
if the current state $x$ is not $\tilde{x}$, i.e.,
\begin{displaymath}
  \tilde{A}(x) :=
  \begin{cases}
    A(x), &\quad \text{if} \ x \in \X, \\
    \{\tilde{a}\}, &\quad \text{if} \ x = \tilde{x}.
  \end{cases}
\end{displaymath}
The one-step cost function $\tilde{c}$ is defined by
\begin{displaymath}
  \tilde{c}(x, a) :=
  \begin{cases}
    \mu(x)^{-1}c(x, a), &\quad \text{if} \ (x, a) \in \Gr(A), \\
    0, &\quad \text{if} \ (x, a) = (\tilde{x}, \tilde{a}).
  \end{cases}
\end{displaymath}
Finally, select a discount factor
\begin{displaymath}
  \tilde{\beta} \in [(K-1)/K, 1),
\end{displaymath}
and define the transition probabilities $\tilde{p}$ as follows. For
$(x, a) \in \Gr(A)$, let
\begin{displaymath}
  \tilde{p}(B | x, a) := \frac{1}{\tilde{\beta}\mu(x)}\int_B
  \mu(y)q(dy | x, a), \ \ B \in \B(\X),
\end{displaymath}
\begin{displaymath}
  \tilde{p}(\{\tilde{x}\} | x, a) := 1 -
  \frac{1}{\tilde{\beta}\mu(x)}\int_\X \mu(y)q(dy | x, a),
\end{displaymath}
and let
\begin{displaymath}
  \tilde{p}(\{\tilde{x}\} | \tilde{x}, \tilde{a}) := 1.
\end{displaymath}
Since only one action is available in state $\tilde{x}$, and the
action sets coincide otherwise, there is a one-to-one correspondence
between policies for the new MDP and the original MDP.

For $x\in\tilde\X$ and $\pi \in \Pi$, let $\tilde v^\pi(x),$ be the
expected total discounted cost for the new model, and let
$\tilde v(x) := \inf_{\pi \in \Pi}\tilde{v}^\pi(x)$.  It is well-known
(see e.g., \cite{feinberg_reduction_2017}) that
$\tilde v^\pi(x)=\mu(x)^{-1}v^\pi(x)$ and
$\tilde v(x)=\mu(x)^{-1}v(x)$ for all $x\in\X.$ 

\begin{thm}\label{thm:hv-weak}
  Suppose Assumptions~\ref{asmp:t} and 
  \ref{asmp:w}(i,ii) hold. If the function
  \begin{equation}\label{eq:cont-V}
    (x, a) \mapsto \int_\X V(y)q(dy | x, a)
  \end{equation}
  is continuous on $\Gr(A)$, then $\tilde{p}$ is a weakly continuous
  transition probability kernel. In addition, if
  Assumption~\ref{asmp:w}(iii) holds, then there exists a stationary
  $\tilde{\beta}$-optimal policy for the MDP obtained from the HV
  transformation, and for this MDP a stationary policy is
  $\tilde{\beta}$-optimal if and only if for all $x \in \X$,
  \begin{equation}\label{eq:3}
  \begin{aligned} {\tilde v}(x) &= {\tilde c}(x, \phi(x))+{\tilde
      \beta}\int_\X{\tilde v}(y){\tilde p}(dy|x,\phi(x)) \\
    &=\min_{a\in
      A(x)}\left[ \tilde{c}(x, a) + \tilde{\beta}\int_\X
      \tilde{v}(y)\tilde{p}(dy | x, a) \right].
 \end{aligned}
\end{equation}
\end{thm}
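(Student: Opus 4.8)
The plan is to establish the two assertions in turn: first that $\tilde p$ is a weakly continuous transition probability kernel, and then, after adding Assumption~\ref{asmp:w}(iii), that the HV-transformed MDP is a discounted MDP of the type for which the discounted-cost optimality equation holds and a stationary optimal policy exists. That each $\tilde p(\cdot|x,a)$ is a probability measure is built into the construction: the two defining expressions sum to total mass $1$, and $\tilde p(\{\tilde x\}|x,a)\ge0$ because Proposition~\ref{prp:sh} gives $\int_\X\mu\,dq(\cdot|x,a)\le\mu(x)-V(x)$, while $V\le\mu\le KV$ yields $\mu(x)-V(x)\le\mu(x)(K-1)/K\le\tilde\beta\mu(x)$, the last step using the choice $\tilde\beta\ge(K-1)/K$.

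For weak continuity, I would fix a convergent sequence $(x_n,a_n)\to(x,a)$ in $\Gr(A)$ and a bounded continuous $f$ on $\tilde\X$, and write $\int_{\tilde\X}f\,d\tilde p(\cdot|x,a)$ as
\[
  \frac{1}{\tilde\beta\mu(x)}\int_\X f\mu\,dq(\cdot|x,a)+f(\tilde x)\,\tilde p(\{\tilde x\}|x,a),
\]
where $\tilde p(\{\tilde x\}|x,a)=1-(\tilde\beta\mu(x))^{-1}\int_\X\mu\,dq(\cdot|x,a)$. Since $\mu$ is continuous and $\mu\ge1$, it suffices to show that $(x,a)\mapsto\int_\X g\,dq(\cdot|x,a)$ is continuous for $g=\mu$ and for $g=f\mu$; continuity at the isolated point $(\tilde x,\tilde a)$ of $\Gr(\tilde A)$ is automatic. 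Both integrands are continuous and satisfy $|g|\le\|f\|_\infty\,KV$ (with $\|f\|_\infty=1$ when $g=\mu$). The main obstacle is exactly here: $\mu$ is unbounded, so weak continuity of $q$ does not apply directly. The remedy is to use the hypothesis that $(x,a)\mapsto\int_\X V\,dq(\cdot|x,a)$ is continuous, combined with the weak continuity of $q$, through a uniform-integrability argument.

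Concretely, weak continuity of $q$ gives $q(\cdot|x_n,a_n)\Rightarrow q(\cdot|x,a)$, and the continuity hypothesis gives $\int_\X V\,dq(\cdot|x_n,a_n)\to\int_\X V\,dq(\cdot|x,a)<\infty$. For each $L>0$ the truncation $V_L:=\min(V,L)$ is bounded continuous, so $\int_\X(V-V_L)\,dq(\cdot|x_n,a_n)\to\int_\X(V-V_L)\,dq(\cdot|x,a)$, and the limit tends to $0$ as $L\to\infty$ by monotone convergence; this renders $V$ uniformly integrable with respect to $\{q(\cdot|x_n,a_n)\}$. Given a continuous $g$ with $|g|\le cV$, truncating $g$ at level $M$ and estimating the tail by $\int_{\{|g|>M\}}|g-g_M|\,dq\le c\int_{\{V>M/c\}}V\,dq$ reduces convergence of $\int_\X g\,dq(\cdot|x_n,a_n)$ to the already-available weak convergence of the bounded truncation, via a standard $\varepsilon/3$ estimate. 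Applying this to $g=\mu$ and $g=f\mu$ yields the two continuity statements and hence the weak continuity of $\tilde p$.

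For the second assertion, observe that the transformed MDP has compact-valued continuous action sets $\tilde A$ (immediate from Assumption~\ref{asmp:w}(i), since $\tilde x$ is isolated), weakly continuous transition probability $\tilde p$ (just shown), and, under Assumption~\ref{asmp:w}(iii), a nonnegative lower semicontinuous one-step cost $\tilde c=\mu^{-1}c$, being the product of the lower semicontinuous $c\ge0$ with the positive continuous $\mu^{-1}$. These are precisely the conditions guaranteeing that the discounted value function is lower semicontinuous and solves the optimality equation with an attaining stationary policy: weak continuity of $\tilde p$ makes $(x,a)\mapsto\int_{\tilde\X}\tilde v\,d\tilde p$ lower semicontinuous for lower semicontinuous bounded-below $\tilde v$, and Berge's theorem for minimization of a lower semicontinuous function over a compact-valued continuous correspondence (cf.\ \cite[p.~570]{aliprantis_infinite_2006}) gives lower semicontinuity of the right-hand side of~(\ref{eq:3}) together with a measurable minimizer. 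Invoking the corresponding result for discounted MDPs with weakly continuous transitions \cite{feinberg_reduction_2017} then delivers a stationary $\tilde\beta$-optimal policy and the stated characterization: a stationary $\phi$ is $\tilde\beta$-optimal if and only if it attains the minimum in the optimality equation satisfied by $\tilde v$, which is~(\ref{eq:3}). I expect the uniform-integrability step for the unbounded weight $\mu$ to be the only substantive difficulty, the remaining verifications being routine given the quoted results.
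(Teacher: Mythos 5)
Your proof is correct and follows the same overall route as the paper: continuity of $\mu$ from Proposition~\ref{prp:sh}, the bound $\mu\le KV$ together with the hypothesis on (\ref{eq:cont-V}) to upgrade weak continuity of $q$ to weak continuity of $\tilde p$, and then a standard existence/characterization theorem for discounted MDPs with compact action sets, weakly continuous transitions, and nonnegative lower semicontinuous costs (the paper cites \cite[Theorem~2]{feinberg_average_2012} for this last step). The one substantive difference is how you handle the unbounded weight $\mu$: the paper delegates this to Lemma~\ref{lem:dct} in the Appendix, a generalized dominated convergence theorem for weakly converging measures proved by applying a Fatou-type inequality to $h-g$ and $h+g$; you instead prove the needed convergence directly by truncating $V$, using weak convergence on the bounded truncation, and extracting uniform integrability of $V$ along the sequence from the convergence of $\int V\,dq$. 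Both arguments are standard and correct; yours is self-contained and makes the role of the hypothesis (\ref{eq:cont-V}) as a uniform-integrability condition explicit, while the paper's factors the analysis into a reusable lemma. Two small points: your $g=f\mu$ may change sign, so if one wanted to route your argument through the paper's lemma (stated for nonnegative $g$) one would first shift by $\|f\|_\infty\mu$; and in the final characterization you should note, as the paper does, that $\tilde v(\tilde x)=0$ and only one action is available at $\tilde x$, which is why (\ref{eq:3}) need only be imposed for $x\in\X$.
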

\begin{proof}
  According to Proposition~\ref{prp:sh}, the function $\mu$ used in
  the HV transformation can be taken to be continuous. Moreover,
  Assumption~\ref{asmp:t} implies that the function $V$ is integrable
  with respect to $q(\cdot | x, a)$, for all $(x, a) \in
  \Gr(A)$. Since $\mu \leq KV$, the weak continuity of $\tilde{p}$
  then follows from Lemma~\ref{lem:dct} in the Appendix.   

  Next, recalling that $\tilde{x}$ is isolated from $\X$, the
  continuity of $\mu$ by Proposition~\ref{prp:sh} implies that the
  nonnegative function $\tilde{c}$ is lower semicontinuous on
  $\Gr(\tilde{A})$. Since the action sets $\tilde{A}(x)$ are compact
  for all $x \in \tilde{\X}$, it follows from
  \cite[Theorem~2]{feinberg_average_2012} that the value function
  $\tilde{v}$ for the discounted MDP defined by the HV transformation
  satisfies
  \begin{displaymath}
    \label{eq:hv-dcoe}
    \tilde{v}(x) = \min_{a \in \tilde{A}(x)}\left[ \tilde{c}(x, a) +
      \tilde{\beta}\int_{\tilde{\X}}\tilde{v}(y)\tilde{p}(dy | x, a)\right]
  \end{displaymath}
  for all $x \in \tilde{\X}$, and there exists a stationary optimal
  policy for this discounted problem. Moreover, since
  $\tilde{v}(\tilde{x}) = 0$, a stationary policy $\phi$ is optimal
  for the discounted problem if and only if (\ref{eq:3}) holds for all
  $x \in \tilde{\X}$. The need to only consider $x \in \X$, for which
  $A(x) = \tilde{A}(x)$, follows from the fact that there is only one
  available action at state $\tilde{x}$.
\end{proof}


\begin{cor}\label{cor:hv-weak}
  Suppose Assumptions~\ref{asmp:t} and \ref{asmp:w} hold and that the
  mapping (\ref{eq:cont-V}) on $\Gr(A)$ is continuous. Then
  \begin{enumerate}[(i)]
  \item the value function $v$ satisfies the optimality equation
    \begin{displaymath}\label{eq:tcoe}\small
      v(x) = \min_{a\in A(x)}\left[c(x, a) + \int_\X v(y)q(dy | x, a)\right]
    \end{displaymath}
    for all $x \in \X$;
  \item there exists a stationary policy that is total-cost optimal;
  \item a stationary policy $\phi$ is total-cost optimal if and only
    if
    \begin{displaymath}
      \label{eq:eq:tcop}
      v(x) = c_\phi(x) + Q_\phi v(x) \quad \forall x \in \X,
    \end{displaymath}
    which holds if and only if $\phi$ is $\tilde{\beta}$-optimal for
    the MDP defined by the HV transformation.
  \end{enumerate}
\end{cor}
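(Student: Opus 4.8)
The plan is to obtain all three statements by translating the conclusions of Theorem~\ref{thm:hv-weak} for the HV-transformed discounted MDP back into the language of the original total-cost MDP, using the identities $\tilde v(x)=\mu(x)^{-1}v(x)$, $\tilde c(x,a)=\mu(x)^{-1}c(x,a)$, and $\tilde p(dy\,|\,x,a)=(\tilde\beta\mu(x))^{-1}\mu(y)q(dy\,|\,x,a)$ on $\X$, together with the one-to-one correspondence between policies and the fact that $\tilde x$ is a cost-free absorbing state with $\tilde v(\tilde x)=0$. Under the hypotheses of the corollary, Assumptions~\ref{asmp:t} and~\ref{asmp:w}(i,ii) hold and the mapping~(\ref{eq:cont-V}) is continuous, so Theorem~\ref{thm:hv-weak} applies: there is a stationary $\tilde\beta$-optimal policy, and a stationary $\phi$ is $\tilde\beta$-optimal if and only if~(\ref{eq:3}) holds for all $x\in\X$. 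Everything then follows by substitution.

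For part~(i), I would start from the second equality in~(\ref{eq:3}) and substitute the three identities above. Because $\tilde v(\tilde x)=0$, the integral over $\tilde\X$ in the discounted Bellman operator reduces to an integral over $\X$; plugging in $\tilde v(y)=\mu(y)^{-1}v(y)$ and $\tilde p(dy\,|\,x,a)=(\tilde\beta\mu(x))^{-1}\mu(y)q(dy\,|\,x,a)$, the factors $\mu(y)^{-1}$ and $\mu(y)$ cancel and $\tilde\beta$ cancels against $(\tilde\beta\mu(x))^{-1}$, so that
\[
\tilde c(x,a)+\tilde\beta\int_\X\tilde v(y)\tilde p(dy\,|\,x,a)
=\mu(x)^{-1}\Bigl[c(x,a)+\int_\X v(y)q(dy\,|\,x,a)\Bigr].
\]
Substituting $\tilde v(x)=\mu(x)^{-1}v(x)$ on the left of~(\ref{eq:3}) and multiplying through by $\mu(x)>0$ then yields the optimality equation in~(i). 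Applying the same substitution to the first equality in~(\ref{eq:3}) gives, after multiplication by $\mu(x)$, the identity $v(x)=c_\phi(x)+Q_\phi v(x)$, which is the policy characterization appearing in~(iii).

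For part~(ii), I would take a stationary $\tilde\beta$-optimal policy $\phi$ supplied by Theorem~\ref{thm:hv-weak}; since $\tilde v^\phi(x)=\tilde v(x)$ for all $x$, while $v^\phi(x)=\mu(x)\tilde v^\phi(x)$ and $v(x)=\mu(x)\tilde v(x)$, the identity $v^\phi=v$ on $\X$ follows, so $\phi$ is total-cost optimal. The remaining equivalence in~(iii)---that $\phi$ satisfies $v=c_\phi+Q_\phi v$ iff it is $\tilde\beta$-optimal iff it is total-cost optimal---combines the ``iff'' of Theorem~\ref{thm:hv-weak} with the observation that, since $\mu\ge 1>0$, the identity $v^\phi=v$ on $\X$ holds iff $\tilde v^\phi=\tilde v$ on $\X$, while optimality at $\tilde x$ is automatic because only one action is available there; note also that the second equality of~(\ref{eq:3}) is just the optimality equation of part~(i) and holds regardless of $\phi$, so the content of the characterization is precisely the first equality. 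The work here is essentially bookkeeping: the only points requiring care are confirming that the integration domain collapses from $\tilde\X$ to $\X$ via $\tilde v(\tilde x)=0$ and that the $\mu$- and $\tilde\beta$-factors cancel exactly, so that the discounted optimality equation and its policy characterization collapse precisely onto their undiscounted counterparts. No genuine analytic obstacle arises beyond what Theorem~\ref{thm:hv-weak} already supplies.
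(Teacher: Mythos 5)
Your proposal is correct and follows essentially the same route as the paper, whose proof is a one-line appeal to Theorem~\ref{thm:hv-weak}, the definition of the HV transformation, and the identity $v(x)=\mu(x)\tilde v(x)$. You have simply written out the substitution and cancellation details that the paper leaves implicit, and these check out.
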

\begin{proof}
  This follows from Theorem~\ref{thm:hv-weak}, the definition of the
  HV transformation, and the fact that $v(x) = \mu(x) \tilde{v}(x)$
  for all $x \in \X$.
\end{proof}

\section{Average costs per unit time}
\label{sec:average-costs-per}

To state Assumption~\ref{asmp:ht} below, given $\phi \in \F$, a Borel
function $u:\X \rightarrow \R$, and a
state $z \in \X$, let
\begin{displaymath}
  \mathbin{_z Q_\phi}u(x) := \int_{\X \setminus \{z\}}u(y)q(dy | x,
  a), \qquad x \in \X,
\end{displaymath}
define $\mathbin{_z Q_\phi^0}u(x) \equiv u(x)$ for $x \in \X$, and for
$x \in \X$ and $n = 1, 2, \dots$ let
$\mathbin{_z Q_\phi^n}u(x) := \mathbin{_x Q_\phi}(\mathbin{_x
  Q_\phi^{n-1}}u)(x)$. Also, let $\mathbf{e}(x) := 1$ for $x \in \X$.

\begin{customthm}{HT}\label{asmp:ht}
  There exists a state $\ell \in \X$ and a constant $K_\ell$
  satisfying
  \begin{equation}
    \label{eq:asmp-ht}
    \sum_{n=0}^\infty\mathbin{_\ell Q_\phi^n}\mathbf{e}(x) \leq K_\ell
    < \infty, \ \ \forall \phi \in \F, \ x \in \X.
  \end{equation}
\end{customthm}
\begin{prp}\label{prp:sh-ht}
  Suppose Assumption~\ref{asmp:ht} holds with a state $\ell \in \X$
  that is isolated from $\X$, and 
  Assumptions~\ref{asmp:w}(i,ii) hold. Then there exists a continuous
  function $\mu_\ell:\X \rightarrow [1, \infty)$ satisfying
  $\mu_\ell(x) \leq K_\ell$ for all $x \in \X$ and
  \begin{equation}
    \label{eq:sh-ht}
    \mu_\ell(x) \geq 1 + \int_{\X \setminus \{\ell\}}\mu_\ell(y)q(dy |
    x, a)
  \end{equation}
  for all $(x, a) \in \Gr(A)$.
\end{prp}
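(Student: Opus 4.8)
The plan is to follow the proof of Proposition~\ref{prp:sh} almost verbatim, replacing the function $V$ by the constant function $\mathbf{e}\equiv 1$, the constant $K$ by $K_\ell$, and the operator $Q_\phi$ by $\mathbin{_\ell Q_\phi}$. Concretely, I would introduce the operator
\begin{displaymath}
  \mathcal{U}_\ell u(x) := \sup_{a\in A(x)}\left[1 + \int_{\X\setminus\{\ell\}}u(y)q(dy | x, a)\right], \qquad x\in\X,
\end{displaymath}
set $u_0\equiv 0$ and $u_n := \mathcal{U}_\ell u_{n-1}$ for $n = 1, 2, \dots$, and show that $u_n$ increases to a function $\mu_\ell$ with the stated properties.

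The step where the hypothesis that $\ell$ is isolated is genuinely used, and which I expect to be the main point to get right, is the continuity of the iterates $u_n$. Since $\ell$ is an isolated point of $\X$, the singleton $\{\ell\}$ is both open and closed, so the indicator $\mathbf{1}_{\X\setminus\{\ell\}}$ is continuous on $\X$; hence for every bounded continuous $u$ the function $u\,\mathbf{1}_{\X\setminus\{\ell\}}$ is bounded and continuous, and Assumption~\ref{asmp:w}(ii) makes $(x, a)\mapsto\int_{\X\setminus\{\ell\}}u(y)q(dy | x, a)$ continuous on $\Gr(A)$. This is exactly where the argument departs from Proposition~\ref{prp:sh}: there one integrates over all of $\X$, whereas here restricting the integral to $\X\setminus\{\ell\}$ preserves weak continuity only because $\{\ell\}$ is clopen. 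Combined with Assumption~\ref{asmp:w}(i), the Berge maximum theorem \cite[p. 570]{aliprantis_infinite_2006} then yields, by induction (each iterate staying bounded because $\sup\{q(\X | x, a):(x, a)\in\Gr(A)\}<\infty$), that every $u_n$ is continuous; monotonicity of $\mathcal{U}_\ell$ gives $u_n\uparrow\mu_\ell\ge u_1\equiv 1$.

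I would next establish the bound $\mu_\ell\le K_\ell$ from Assumption~\ref{asmp:ht}, by the same reasoning as in \cite[Proof of Proposition~1]{feinberg_reduction_2017} with $\mathbf{e}$ and $K_\ell$ playing the roles of $V$ and $K$; intuitively, $\mu_\ell$ is the optimal total expected reward of a positive dynamic programming problem with per-stage reward $\mathbf{e}$ and the sub-stochastic kernel obtained by deleting $\ell$, and since stationary policies are sufficient for such problems its value is dominated by $\sup_{\phi\in\F}\sum_{n=0}^\infty\mathbin{_\ell Q_\phi^n}\mathbf{e}(x)\le K_\ell$. Lebesgue's monotone convergence theorem then gives $\mu_\ell = \mathcal{U}_\ell\mu_\ell$, and taking the supremum defining $\mathcal{U}_\ell$ over all $a\in A(x)$ yields $\mu_\ell(x)\ge 1 + \int_{\X\setminus\{\ell\}}\mu_\ell(y)q(dy | x, a)$ for every $(x, a)\in\Gr(A)$, which is precisely (\ref{eq:sh-ht}).

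Finally, with (\ref{eq:sh-ht}) and $1\le\mu_\ell\le K_\ell$ in hand, I would prove continuity of $\mu_\ell$ exactly as in Proposition~\ref{prp:sh}: the inequality $\int_{\X\setminus\{\ell\}}\mu_\ell(y)q(dy | x, a)\le\mu_\ell(x) - 1\le\mu_\ell(x)(K_\ell-1)/K_\ell$, valid for all $(x, a)\in\Gr(A)$, shows that $\mathcal{U}_\ell$ is a contraction of modulus $(K_\ell-1)/K_\ell$ in the supremum norm, so the increments $\|u_{n+1} - u_n\|$ decay geometrically and $\{u_n\}$ is Cauchy in the Banach space of bounded continuous functions on $\X$; its limit is continuous and coincides with the pointwise limit $\mu_\ell$. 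The only genuine obstacle is the boundedness step $\mu_\ell\le K_\ell$, where the passage from the stationary-policy bound in Assumption~\ref{asmp:ht} to the supremum defining $\mu_\ell$ must be justified as in \cite{feinberg_reduction_2017}; every remaining step is a routine transcription of the proof of Proposition~\ref{prp:sh}.
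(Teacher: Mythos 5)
Your proof is correct and follows essentially the same route as the paper's: both exploit the fact that $\{\ell\}$ is clopen (because $\ell$ is isolated), so that integration of a bounded continuous function against $q$ restricted to $\X\setminus\{\ell\}$ remains weakly continuous, and both then run the Proposition~\ref{prp:sh} machinery with $\mathbf{e}$ and $K_\ell$ in place of $V$ and $K$. The only organizational difference is that the paper invokes Proposition~\ref{prp:sh} as a black box for the restricted model on $\X\setminus\{\ell\}$ with kernel $q_\ell(B\,|\,x,a)=q(B\setminus\{\ell\}\,|\,x,a)$ and then extends $\mu_\ell$ to $\ell$ by a separate one-line continuity argument, whereas you re-run the value iteration directly on all of $\X$; both are valid.
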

\begin{proof}
  Consider the transition kernel $q_\ell$ from
  $\Gr_\ell(A) := \{(x, a) \in \Gr(A) | x \neq \ell\}$ to $\X_\ell := \X \setminus \{\ell\}$ where
  \begin{displaymath}
    q_\ell(B | x, a) := q(B \setminus \{\ell\} | x, a)
  \end{displaymath}
  for $B \in \B(\X_\ell)$ and $(x, a) \in \Gr_\ell(A)$. Then it
  follows from Proposition~\ref{prp:sh} and Assumption~\ref{asmp:ht}
  that there exists a continuous function
  $\mu_\ell:\X_\ell \rightarrow [1, \infty)$ that is bounded above by
  \begin{displaymath}
  K_\ell^{-} := \sup_{x \in \X \setminus \{\ell\}}\left\{\sum_{n=0}^\infty\mathbin{_\ell
    Q_\phi^n}\mathbf{e}(x)\right\}
\end{displaymath}
and satisfies (\ref{eq:sh-ht}) for all
  $(x, a) \in \Gr_\ell(A)$. Letting
  \begin{displaymath}
    \mu_\ell(\ell) := \sup_{A(\ell)}\left[1 + \int_{\X \setminus \{\ell\}}\mu_\ell(y)q(dy
    | x, a)\right]
  \end{displaymath}
  and recalling that $\ell$ is isolated from $\X$, it follows that
  this extension of $\mu_\ell$ to $\X$ is continuous and bounded above
  by $K_\ell$ according to Assumption~\ref{asmp:ht}, and satisfies
  (\ref{eq:sh-ht}) for all $(x, a) \in \Gr(A)$.
\end{proof}

\begin{remark}\label{rem:1}
  The function $\mu_\ell$ that is constructed in the proof of
  Proposition~\ref{prp:sh-ht} gives, for each $x \in \X$, the supremum
  $\mu_\ell(x)$ (over all policies) of the expected number of epochs
  before the system hits state $\ell$ after epoch 1. If the state
  $\ell$ is not isolated, then this function $\mu_\ell$ may be
  discontinuous at $\ell$ despite the weak continuity of $q$.

To verify this, let
$\ell := (\sqrt{5} - 1)/2$ and consider the following MDP with only
one available action $a_0$ for each state and a constant one-step cost
function.  The state space is the closed interval $\X := [0, \ell]$,
and the transition probabilities $q(\cdot | x, a_0)$ are defined for
$x \in \X$ as follows. Let $q(\{\ell\} | 0, a_0) := 1$,
$q(\{\ell\} | \ell, a_0) := 1 - \ell$, and
$q(\{0\} | \ell, a_0) := \ell$. In addition, for $x \in (0, \ell)$ let
$q(\{x\} | x, a_0) := x^2$, $q(\{\ell\} | x, a_0) := 1 - x - x^2$, and
$q(\{0\} | x, a_0) := x$. 

  Observe that Assumption~\ref{asmp:ht} holds because
  $\mu_\ell(0) = 1$, $\mu_\ell(\ell) = (\sqrt{5} + 1)/2$, and
  $\mu_\ell(x) = 1/(1 - x) \leq (\sqrt{5}+3)/2$ for $x \in (0,
  \ell)$. Moreover, it is straightforward to verify that this MDP
  satisfies Assumptions~\ref{asmp:w}(i,ii). On the other
  hand, since
  $\lim_{x\rightarrow\ell}\mu_\ell(x) = 1/(1 - \ell) = (\sqrt{5} +
  3)/2 > (\sqrt{5} + 1)/2 = \mu_\ell(\ell)$, the function $\mu_\ell$
  is discountinuous at $\ell$.
\end{remark}

\subsection{HV-AG transformation}
\label{sec:hv-ag-transformation}

Suppose Assumption~\ref{asmp:ht} holds. We now describe the
\emph{HV-AG transformation} \cite{feinberg_reduction_2017}, which is
based on the work of Akian \& Gaubert \cite{akian_policy_2013}. As was
the case with the HV tranformation, the HV-AG transformation results
in a discounted MDP
, whose set of
policies corresponds to the set of policies for the original MDP. 

The components of the discounted MDP defined by the HV-AG
transformation will be indicated by a horizontal bar. The state space
is $\bar{\X} := \X \cup \{\bar{x}\}$, where $\bar{x} \not\in \X$ is a
cost-free absorbing state that is isolated from $\X$. The action space
is $\bar{\A} := \A \cup \{\bar{a}\}$, where $\bar{a}$ is the only
action available when the system is in state $\bar{x}$. The set
$\bar{A}(x)$ of available actions is unchanged if the current state
$x$ is not $\bar{x}$, i.e.,
\begin{displaymath}
  \bar{A}(x) :=
  \begin{cases}
    A(x), &\quad \text{if} \ x \in \X, \\
    \{\bar{a}\}, &\quad \text{if} \ x = \bar{x}.
  \end{cases}
\end{displaymath}
The one-step cost function $\bar{c}$ is defined by
\begin{displaymath}
  \bar{c}(x, a) :=
  \begin{cases}
    \mu_\ell(x)^{-1}c(x, a), &\quad \text{if} \ (x, a) \in \Gr(A), \\
    0, &\quad \text{if} \ (x, a) = (\bar{x}, \bar{a}).
  \end{cases}
\end{displaymath}
Finally, select a discount factor
\begin{displaymath}
  \bar{\beta} \in [(K_\ell - 1)/K_\ell, 1),
\end{displaymath}
and define the transition probabilities $\bar{p}$ as follows. For $(x,
a) \in \Gr(A)$ and $B \in \B(\X \setminus \{\ell\})$, let
\begin{displaymath}
\bar{p}(B | x, a) :=
  \frac{1}{\bar{\beta}\mu_\ell(x)}\int_B\mu_\ell(y)q(dy | x, a),
\end{displaymath}
and let
\begin{displaymath}
  \bar{p}(\{\ell\} | x, a) := \frac{\mu_\ell(x) - 1 - \int_{\X
  \setminus \{\ell\}}\mu_\ell(y) q(dy | x, a)}{\bar{\beta}\mu_\ell(x)},
\end{displaymath}
\begin{displaymath}
  \bar{p}(\{\bar{x}\} | x, a) := 1 - \frac{\mu_\ell(x) - 1}{\bar{\beta}\mu_\ell(x)}.
\end{displaymath}
Finally, let
\begin{displaymath}
  \bar{p}(\{\bar{x}\} | \bar{x}, \bar{a}) := 1.
\end{displaymath}
Since only the action $\bar{a}$ is available at the state $\bar{x}$ and the
action sets coincide otherwise, there is a one-to-one correspondence
between policies for the new MDP and the original MDP.

For $x\in\bar\X$ and $\pi \in \Pi$, let $\bar v^\pi(x),$ be the
expected total discounted cost for the new model, and let
$\bar v(x) := \inf_{\pi \in \Pi}\bar{v}^\pi(x)$.

\begin{thm}\label{thm:hv-ag-weak}
  Suppose Assumption~\ref{asmp:ht} holds with a state $\ell \in \X$
  that is isolated from $\X$, and 
  Assumptions~\ref{asmp:w}(i,ii) hold. Then $\bar{p}$ is a weakly
  continuous transition probability kernel. In addition, if
  Assumption~\ref{asmp:w}(iii) holds, then there exists a stationary
  $\bar{\beta}$-optimal policy for the MDP defined by the HV-AG
  transformation, and for this MDP a stationary policy is
  $\bar{\beta}$-optimal if and only if for all $x \in \X$,
  \begin{equation}
    \label{eq:4}
    \begin{aligned}
      \bar{v}(x) &= \bar{c}(x, \phi(x)) +
      \bar{\beta}\int_\X\bar{v}(y)\bar{p}(dy | x, \phi(x)) \\
      &= \min_{a \in A(x)}\left[ \bar{c}(x, a) +
        \bar{\beta}\int_\X\bar{v}(y)\bar{p}(dy | x, a)\right].
    \end{aligned}
  \end{equation}

\end{thm}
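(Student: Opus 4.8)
The plan is to mirror the proof of Theorem~\ref{thm:hv-weak}, splitting the argument into two parts: first establishing that $\bar{p}$ is a weakly continuous transition probability kernel, and then invoking a known discounted-MDP result to obtain the optimality equation and a stationary optimal policy. By Proposition~\ref{prp:sh-ht}, under the stated hypotheses (Assumption~\ref{asmp:ht} with an isolated $\ell$ together with Assumptions~\ref{asmp:w}(i,ii)) there is a continuous function $\mu_\ell:\X\to[1,\infty)$ bounded above by $K_\ell$ and satisfying (\ref{eq:sh-ht}). The crucial simplification relative to Theorem~\ref{thm:hv-weak} is that $\mu_\ell$ is \emph{bounded}, so no analog of the continuity assumption (\ref{eq:cont-V}) or of the dominated-convergence Lemma~\ref{lem:dct} will be needed.

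For the weak continuity of $\bar{p}$, I would fix a bounded continuous $f:\bar{\X}\to\R$ and decompose $\int_{\bar{\X}}f\,d\bar{p}(\cdot|x,a)$ into the integral over $\X\setminus\{\ell\}$ plus the atoms at $\ell$ and $\bar{x}$. The atom $\bar{p}(\{\bar{x}\}|x,a)=1-(\mu_\ell(x)-1)/(\bar{\beta}\mu_\ell(x))$ depends continuously on $x$ through $\mu_\ell$, while the remaining contributions reduce to integrals of the form $(x,a)\mapsto\int_{\X\setminus\{\ell\}}g(y)\mu_\ell(y)q(dy|x,a)$ for bounded continuous $g$ (namely $g=f$ for the absolutely continuous part and $g\equiv 1$ for the numerator of $\bar{p}(\{\ell\}|x,a)$).

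The main obstacle, and the step where the isolation of $\ell$ is essential, is handling the atom at $\ell$ and the integral restricted to $\X\setminus\{\ell\}$. The key observation is that since $\ell$ is isolated from $\X$, the indicator $\mathbf{1}_{\X\setminus\{\ell\}}$ is continuous, so $y\mapsto g(y)\mu_\ell(y)\mathbf{1}_{\X\setminus\{\ell\}}(y)$ is bounded and continuous whenever $g$ is. The weak continuity of $q$ then yields continuity of $(x,a)\mapsto\int_{\X\setminus\{\ell\}}g(y)\mu_\ell(y)q(dy|x,a)$, and assembling the pieces gives continuity of $(x,a)\mapsto\int_{\bar{\X}}f\,d\bar{p}(\cdot|x,a)$. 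Along the way I would verify that each $\bar{p}(\cdot|x,a)$ is a probability measure: the three masses sum to one by construction, nonnegativity of $\bar{p}(\{\ell\}|x,a)$ is exactly inequality (\ref{eq:sh-ht}), and nonnegativity of $\bar{p}(\{\bar{x}\}|x,a)$ follows from $1-1/\mu_\ell(x)\le(K_\ell-1)/K_\ell\le\bar{\beta}$, using $1\le\mu_\ell(x)\le K_\ell$.

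For the second part, assuming additionally Assumption~\ref{asmp:w}(iii), I note that since $\mu_\ell$ is continuous and strictly positive, $\bar{c}(x,a)=\mu_\ell(x)^{-1}c(x,a)$ is lower semicontinuous on $\Gr(A)$, and because $\bar{x}$ is isolated the extension by $0$ at $(\bar{x},\bar{a})$ is lower semicontinuous on $\Gr(\bar{A})$. The action sets $\bar{A}(x)$ are compact for every $x\in\bar{\X}$, so the hypotheses of \cite[Theorem~2]{feinberg_average_2012} are met, yielding the discounted optimality equation on $\bar{\X}$ and a stationary optimal policy. Since $\bar{x}$ is cost-free and absorbing, $\bar{v}(\bar{x})=0$, so for $x\in\X$ the integral over $\bar{\X}$ reduces to the integral over $\X$ appearing in (\ref{eq:4}) and the minimum over $\bar{A}(x)$ coincides with the minimum over $A(x)$; the characterization of stationary $\bar{\beta}$-optimal policies then reduces to (\ref{eq:4}), exactly as in the proof of Theorem~\ref{thm:hv-weak}.
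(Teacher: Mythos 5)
Your proposal is correct, and the second half (lower semicontinuity of $\bar{c}$, compactness of the action sets, and the appeal to \cite[Theorem~2]{feinberg_average_2012} together with $\bar{v}(\bar{x})=0$ to reduce the optimality equation on $\bar{\X}$ to (\ref{eq:4}) on $\X$) matches the paper's argument exactly. Where you diverge is in establishing the weak continuity of $\bar{p}$: the paper simply notes that $\mu_\ell\leq K_\ell<\infty$ and invokes the dominated-convergence Lemma~\ref{lem:dct} from the Appendix, the same tool it needs for the HV transformation where $\mu$ is unbounded; you instead argue directly that, because $\mu_\ell$ is continuous and \emph{bounded} and $\ell$ is isolated (so $\mathbf{1}_{\X\setminus\{\ell\}}$ is continuous), the integrands $y\mapsto f(y)\mu_\ell(y)\mathbf{1}_{\X\setminus\{\ell\}}(y)$ are themselves bounded and continuous and the weak continuity of $q$ applies with no extra machinery. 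Your route is more elementary and makes explicit where the isolation of $\ell$ enters, something the paper leaves implicit inside its citation of Lemma~\ref{lem:dct} (whose hypothesis of a continuous integrand also silently relies on that isolation). You also supply the verification that each $\bar{p}(\cdot|x,a)$ is a probability measure --- nonnegativity of the atom at $\ell$ via (\ref{eq:sh-ht}) and of the atom at $\bar{x}$ via $1-1/\mu_\ell(x)\leq (K_\ell-1)/K_\ell\leq\bar{\beta}$ --- which the paper omits but which is part of the assertion that $\bar{p}$ is a transition probability kernel. Both approaches are sound; yours trades a citation for a short self-contained computation and is, if anything, slightly more complete.
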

\begin{proof}
  Proposition~\ref{prp:sh-ht} implies that the function $\mu_\ell$
  used in the HV-AG transformation can be taken to be
  continuous. Since $\mu_\ell \leq K_\ell < \infty$, the weak
  continuity of $\bar{p}$ follows from Lemma~\ref{lem:dct} in the
  Appendix.

  Next, observe that $\bar{c}$ is lower semicontinuous on
  $\Gr(\bar{A})$, and the action sets $\bar{A}(x)$ are compact for all
  $x \in \bar{\X}$. According to
  \cite[Theorem~2]{feinberg_average_2012}, it follows that
  \begin{displaymath}
    \bar{v}(x) = \min_{ a\in \bar{A}(x)}\left[\bar{c}(x, a) +
      \bar{\beta}\int_{\bar{\X}}\bar{v}(y)\bar{p}(dy | x, a)\right]
  \end{displaymath}
  for all $x \in \bar{\X}$, there exists a stationary optimal
  policy for the discounted problem, and a stationary policy is
  optimal for this problem if and only if (\ref{eq:4}) holds for all
  $x \in \X$.
\end{proof}
\begin{cor}\label{cor:hv-ag-weak}
  Suppose Assumption~\ref{asmp:ht} holds with a state $\ell \in \X$
  that is isolated from $\X$ and Assumption~\ref{asmp:w} holds. Then
  \begin{enumerate}[(i)]
  \item the constant $w := \bar{v}(\ell)$ and the function
    $h(x) := \mu(x)[\bar{v}(x) - \bar{v}(\ell)]$, $x \in \X$, satisfy
    \begin{displaymath}\small
      w + h(x) = \min_{ a\in A(x)}\left[c(x, a) + \int_\X h(y)q(dy |
        x, a)\right]
    \end{displaymath}
    for all $x \in \X$, and
  \item if the one-step cost function $c$ is bounded, and $q$ is a
    transition probability kernel, then there exists a stationary
    average-cost optimal policy, and any stationary policy $\phi$
    satisfying
    \begin{displaymath}
      w + h(x) = c_\phi(x) + Q_\phi h(x) \quad \forall x \in \X,
    \end{displaymath}
    where $w$ are $h$ are defined in (i), is average-cost optimal;
  \item there exists a $\bar{\beta}$-optimal stationary policy for the
    MDP defined by the HV-AG transformation, and under the hypotheses
    of (ii) every such policy is
    average-cost optimal for the original MDP.
  \end{enumerate}
\end{cor}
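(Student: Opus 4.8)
The plan is to obtain (i) by substituting the definitions of $\bar c$ and $\bar p$ into the discounted optimality equation (\ref{eq:4}) of Theorem~\ref{thm:hv-ag-weak}, then to establish (ii) by a telescoping verification argument, and finally to read off (iii) from the correspondence between $\bar\beta$-optimality and minimizers of the equation in (i). For (i), I would start from (\ref{eq:4}) and split $\int_\X\bar v(y)\bar p(dy|x,a)$ into its part over $\X\setminus\{\ell\}$ and its atom at $\ell$. Substituting $\bar c(x,a)=\mu_\ell(x)^{-1}c(x,a)$ and the explicit forms of $\bar p$ on $\X\setminus\{\ell\}$ and at $\{\ell\}$, the factor $\bar\beta$ cancels in both terms; multiplying through by $\mu_\ell(x)$ and collecting the two contributions proportional to $\bar v(\ell)$ turns them into $\int_{\X\setminus\{\ell\}}[\bar v(y)-\bar v(\ell)]\mu_\ell(y)q(dy|x,a)=\int_{\X\setminus\{\ell\}}h(y)q(dy|x,a)$. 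With $w:=\bar v(\ell)$ and $h(x):=\mu_\ell(x)[\bar v(x)-\bar v(\ell)]$, the remaining $a$-independent term $w(\mu_\ell(x)-1)$ factors out of the minimum, and since $h(\ell)=0$ the integral over $\X\setminus\{\ell\}$ coincides with the integral over $\X$. Rearranging gives the displayed equation in (i); this step is purely algebraic.

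For (ii), the first point is that $h$ is bounded: because $c\ge 0$ is bounded and $\mu_\ell\ge 1$, the cost $\bar c=\mu_\ell^{-1}c$ satisfies $0\le\bar c\le\|c\|_\infty$, so $0\le\bar v\le\|c\|_\infty/(1-\bar\beta)$, and together with $\mu_\ell\le K_\ell$ this bounds $h$. Since $q$ is now a transition probability kernel, $\alpha\equiv 1$, the controlled process evolves under $q$, and $\int_\X h(y)q(dy|x,a)$ is the conditional expectation of $h(x_{n+1})$ given $(x_n,a_n)=(x,a)$. Then I would sum the inequality $w+h(x)\le c(x,a)+\int_\X h(y)q(dy|x,a)$ along an arbitrary policy $\pi$ and telescope to get $Nw+h(x)-\E_x^\pi h(x_N)\le\E_x^\pi\sum_{n=0}^{N-1}c(x_n,a_n)$; dividing by $N$, using boundedness of $h$ to send the boundary term to zero, and taking $\limsup$ yields $w\le w^\pi(x)$ for every $\pi$, hence $w\le w(x)$. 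For a stationary $\phi$ attaining the minimum in the equation of (i), the same telescoping holds with equality, giving $w=w^\phi(x)$; combined with the previous bound this shows $w^\phi(x)=w(x)=w$, so $\phi$ is average-cost optimal. The existence of such a stationary minimizer is supplied by the $\bar\beta$-optimal stationary policy of Theorem~\ref{thm:hv-ag-weak}, which by the computation in (i) attains the minimum in the equation of (i).

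For (iii), Theorem~\ref{thm:hv-ag-weak} already provides a $\bar\beta$-optimal stationary policy for the HV-AG MDP and characterizes $\bar\beta$-optimality of a stationary $\phi$ by (\ref{eq:4}). By the algebra of (i), (\ref{eq:4}) is equivalent to $w+h(x)=c_\phi(x)+Q_\phi h(x)$ for all $x\in\X$, i.e.\ to $\phi$ attaining the minimum in the equation of (i). Under the hypotheses of (ii), part (ii) then shows that every such $\phi$ is average-cost optimal, which is exactly the assertion of (iii).

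The step I expect to be the main obstacle is the verification in (ii): one must confirm that $h$ is genuinely bounded---this is where the boundedness of $c$ is used---so that $\E_x^\pi h(x_N)/N\to 0$ and the telescoped boundary term does not interfere with the $\limsup$, and one must invoke the hypothesis that $q$ is a transition probability kernel in order to identify $\int_\X h\,dq$ with the one-step conditional expectation of $h$ along the controlled process. The remaining steps are routine algebra or direct appeals to Theorem~\ref{thm:hv-ag-weak}.
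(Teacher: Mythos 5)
Your proposal is correct and follows essentially the same route as the paper: part (i) by substituting the definitions of $\bar c$ and $\bar p$ into the discounted optimality equation (\ref{eq:4}) and cancelling the $a$-independent terms, and part (iii) by combining Theorem~\ref{thm:hv-ag-weak} with (i) and (ii). The only difference is that for (ii) the paper simply cites the standard verification theorem for the average-cost optimality equation (Hern\'andez-Lerma and Lasserre, Proposition~5.5.5), whereas you carry out that telescoping argument explicitly --- including the boundedness of $h$ needed to kill the boundary term --- which is exactly the content of the cited result.
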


\begin{proof}
  Statement (i) follows from Theorem~\ref{thm:hv-ag-weak} and the
  definition of the HV-AG transformation. Moreover, statement (ii)
  follows from statement (i) and
  \cite[Proposition~5.5.5]{hernandez-lerma_discrete-time_1995}. Finally,
  statement (iii) follows from Theorem~\ref{thm:hv-ag-weak}, statement (ii), the definition of the
  HV-AG transformation.
\end{proof}


\section{Capacitated inventory control with fixed ordering costs and lost
  sales}
\label{sec:example}

Consider the following single-item \emph{capacitated}
\emph{periodic-review} inventory control problem with \emph{fixed
  ordering costs} and \emph{lost sales}. At each period
$n = 0, 1, \dots$, the decision-maker observes the current
\emph{inventory level} $x_n$ and places an \emph{order} $a_n \geq
0$. After the order is received in the same period, the demand
$D_{n+1} \geq 0$ is realized. Any remaining inventory is held to the
next period, and all unmet demand is lost. The demands
$D_1, D_2, \dots$ are assumed to be independent and identically
distributed with distribution $G_D(\cdot)$, where $G_D(0) <
1$. Moreover, we assume that the system is \emph{capacitated}, where
the inventory level can be at most $C < \infty$ and the maximum order
size is $M < \infty$.

Whenever a positive amount is ordered, a fixed
cost $K \geq 0$ is incurred in addition to a per-unit cost of
$\overline{c} > 0$. The cost to hold $x$ units of inventory for one
period is $h(x)$, where $h:[0, C] \rightarrow [0, \infty)$ is
assumed to be continuous.

The inventory control problem described above can be formulated as an
MDP as follows. The state space is $\X := [0, C] \cup \{0_L\}$, where
$0_L$ is isolated from $[0, C]$. The special state $0_L$, which
indicates the occurrence of a lost sale, will be used to apply the
results in Section~\ref{sec:average-costs-per}. For every $x \in \X$,
the set of available actions is $A(x) \equiv \A := [0, M]$.


Letting $0_L + y := y$ for $y \in \R$, the state process can be
described by the stochastic equation
\begin{equation}\footnotesize
  \begin{aligned}
    x_{n+1} &= F(x_n, a_n, D_{n+1}) \\
    &:=
    \begin{cases}
      \min\{x_n + a_n - D_{n+1}, C\}, & x_n + a_n \geq D_{n+1}, \\
      0_L, & x_n + a_n < D_{n+1}.
    \end{cases}
  \end{aligned}\notag
\end{equation}
This equation defines the transition probability kernel $q$ for the
corresponding MDP, where
\begin{displaymath}
  q(B | x, a) := \int_B \mathbf{1}\{F(x, a, s) \in B\}~dG_D(s)
\end{displaymath}
for $B \in \B(\X)$ and $(x, a) \in \X \times \A$, where
$\mathbf{1}\{\cdot\}$ denotes the indicator function. Since $0_L$ is
isolated from $\X$ and $F$ is continuous on
$\X \times \A \times [0, \infty)$, it follows that $q$ is weakly
continuous; see e.g., \cite[p.~ 92]{hernandez-lerma_adaptive_1989}.

Recall that $K \geq 0$ is the \emph{fixed ordering cost},
$\overline{c} \geq 0$ is the \emph{per-unit ordering cost}, and
$h:\X \rightarrow [0, \infty)$ is the per-period \emph{holding cost
  function}. It follows that the associated one-step cost function
$c:\X \times \A \rightarrow [0, \infty)$ is given by $c(x, a) :=
  K\mathbf{1}\{a > 0\} + \overline{c}a + \int_0^\infty
  h[F(x, a, s)]~dG_D(s)$.
  Since $h$ is continuous on $[0, C]$, $c$ is bounded on
  $\X \times \A$. Moreover, for every $\lambda \in \R$, the set
  $\{(x, a) \in \X \times \A | c(x, a) \leq \lambda\}$ is a compact
  subset of $\X \times \A$; this implies that $c$ is lower
  semicontinuous on $\X \times \A$. Recalling that the action sets
  $A(x) \equiv \A = [0, M]$ for all $x \in \X$, it follows that
  Assumption~\ref{asmp:w} holds.

  \begin{customthm}{D}\label{asmp:1}
  With positive probability, the per-period demand $D$ is greater than
  the maximum order size $M$, that is, $G_D(M) < 1$.
  \end{customthm}

\begin{prp}
  Assumption~\ref{asmp:1} implies that Assumption~\ref{asmp:ht} holds
  with $\ell = 0_L$.
\end{prp}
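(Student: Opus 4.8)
The plan is to show that Assumption~D guarantees the system reaches the lost-sale state $0_L$ within a bounded expected number of steps, uniformly over all policies and initial states. Since the only available action sets are $A(x)\equiv[0,M]$ and the dynamics are driven by i.i.d.\ demands, the key observation is that from any state $x\in[0,C]$ and under any action $a\in[0,M]$, the inventory available to meet demand is $x+a\le C+M$. If a single demand exceeds $C+M$, then a lost sale occurs, forcing a transition to $0_L$. However, Assumption~D only gives $G_D(M)<1$, i.e.\ $\P(D>M)>0$; to guarantee a transition to $0_L$ I need the demand to exceed the \emph{on-hand plus ordered} amount, which can be as large as $C+M$. So the first thing I would do is reduce to controlling the probability of hitting $0_L$ in a fixed number of steps using a uniform lower bound.

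**First I would** establish a uniform positive lower bound $\delta>0$ on the one-step probability of a lost sale. Let $p_0:=\P(D>M)=1-G_D(M)>0$ by Assumption~D. The subtlety is that when $x>0$ the buffer $x+a$ can exceed $M$, so one demand exceeding $M$ need not cause a lost sale. The clean way around this is to iterate: I claim there is a fixed integer $m$ and a $\delta>0$ such that, from \emph{any} state and under \emph{any} policy, the probability of reaching $0_L$ within $m$ steps is at least $\delta$. Indeed, the maximum possible inventory is $C$, so $x+a\le C+M$ always; choosing $m$ with $mM>C+M$ (e.g.\ $m=\lceil (C+M)/M\rceil+1$ would do but more simply one tracks the worst-case depletion), a run of $m$ consecutive demands each exceeding $M$ drives the cumulative unmet demand past any achievable inventory, forcing a lost sale. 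Each such demand has probability at least $p_0$ and the demands are independent, so this event has probability at least $\delta:=p_0^{\,m}>0$, uniformly in the history and hence in the policy.

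**Next I would** convert this uniform reachability bound into the summability condition~\eqref{eq:asmp-ht} of Assumption~HT. Writing $P^\pi_x(\tau_\ell>km)$ for the probability that $0_L$ has not been hit after epoch~$1$ within $km$ steps under policy $\pi$ from $x$, the Markov property together with the uniform bound $\delta$ gives a geometric tail estimate $P^\pi_x(\tau_\ell>km)\le(1-\delta)^k$ for all $\pi$ and $x$. Since $\sum_{n=0}^\infty \mathbin{_\ell Q_\phi^n}\mathbf{e}(x)$ is exactly the expected number of epochs before state $\ell=0_L$ is hit after epoch~$1$ (as noted in Remark~\ref{rem:1}), this quantity equals $\E^\phi_x[\tau_\ell]$, which is bounded above by $m\sum_{k=0}^\infty(1-\delta)^k=m/\delta=:K_\ell<\infty$, uniformly over $\phi\in\F$ and $x\in\X$. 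This yields~\eqref{eq:asmp-ht} with $\ell=0_L$.

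**The main obstacle** I anticipate is \textbf{not} the geometric-tail bookkeeping, which is routine, but pinning down the uniform one-step (or $m$-step) lost-sale probability correctly given that Assumption~D only controls $\P(D>M)$ rather than $\P(D>C+M)$. The buffer $x+a$ can be as large as $C+M$, so a single large demand is insufficient and the argument genuinely requires iterating over several periods, carefully accounting for how the on-hand inventory is depleted when successive demands exceed the order size $M$ but the state is not yet $0_L$. Making the depletion argument airtight --- showing that after finitely many such demands the available stock is necessarily exhausted regardless of ordering decisions, so that the next large demand must trigger $0_L$ --- is the crux; once the uniform bound $\delta$ is in hand, everything else follows from standard geometric-ergodicity estimates.
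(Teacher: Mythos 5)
Your overall strategy is exactly the paper's: exhibit a fixed horizon $m$ and a constant $\delta>0$ such that, uniformly over all policies and initial states, the process hits $0_L$ within $m$ steps with probability at least $\delta$, then sum the resulting geometric tail to bound $\E_x^\phi\tau_L=\sum_{n=0}^\infty\mathbin{_{0_L}Q^n_\phi}\mathbf{e}(x)$ by $m/\delta$. The tail-summation half is routine and correct. The gap sits precisely in the step you flag as the crux: the claim that a run of $m$ consecutive demands each exceeding $M$ forces a lost sale once $mM>C+M$. It does not. If no lost sale occurs during those $m$ periods, then $x_m=x_0+\sum_{j=0}^{m-1}a_j-\sum_{j=1}^{m}D_j$, so the cumulative supply over the run is at most $C+mM$ while the cumulative demand is only guaranteed to exceed $mM$; the slack $C$ is never absorbed, no matter how large $m$ is. Concretely, take $x_0=C$, $a_j\equiv M$, and $D_j\equiv M+C/(2m)$: every demand exceeds $M$, yet $x_j=C-jC/(2m)\geq C/2>0$ for all $j\leq m$, so no lost sale occurs within $m$ steps. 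Assumption~\ref{asmp:1} controls $\P(D>M)$, not the per-period excess of demand over the maximal replenishment.

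The repair is short. Since $\{D>M\}=\bigcup_{\epsilon>0}\{D\geq M+\epsilon\}$, continuity of measure yields an $\epsilon>0$ with $\gamma_\epsilon:=\P(D\geq M+\epsilon)>0$; a run of $m:=\lceil C/\epsilon\rceil+1$ consecutive demands each at least $M+\epsilon$ depletes the cumulative supply by more than $C$ and therefore does force a transition to $0_L$, giving the uniform bound $\delta=\gamma_\epsilon^{\,m}$ and hence \eqref{eq:asmp-ht} with $\ell=0_L$ and $K_\ell=m/\delta$. For what it is worth, the paper's own proof uses the horizon $\lceil C/M\rceil+1$ and the probability $\gamma=1-G_D(M)$ and glosses over exactly the same point; the $\epsilon$-extraction is needed there as well unless one additionally assumes, say, integer-valued demand. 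Once the uniform $m$-step bound is in hand, your identification of $\sum_{n=0}^\infty\mathbin{_{0_L}Q^n_\phi}\mathbf{e}(x)$ with $\E_x^\phi\tau_L$ and the resulting bound coincide with the paper's computation.
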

\begin{proof}
  Let $\gamma := 1 - G_D(M) > 0$, and let
  $\tau_L := \inf\{n \geq 1 | x_n = 0_L\}$ denote the first epoch $n$ when
  the demand $D_n$ generated a lost sale. Since the
  amount of on-hand inventory is at most $C$, and at most $M$ units
  can be ordered in a single period, it follows that
  $\P_x^\phi\{x_{\lceil C/M \rceil + 1} = 0_L\} \geq \gamma^{\lceil
    C/M \rceil + 1} > 0$ for all $\phi \in \F$ and $x \in \X$. Hence
  \begin{align*}
 &   \sum_{n=0}^\infty \mathbin{_{0_L}Q^n_\phi}\mathbf{e}(x) =
                                                                 \E_x^\phi
                                                                 \tau_L 
    = \sum_{n = 0}^\infty \P_x^\phi\{\tau_L > n\} \\
    &= 1 + \sum_{n=1}^\infty\P_x^\phi\{x_k \neq 0_L, \ k = 1, \dots,
      n\} \\
    &\leq 1 + \sum_{n=1}^\infty(1 - \gamma^{\lceil C/M
      \rceil + 1})^{\lfloor n / (\lceil C/M
      \rceil + 1)\rfloor} \\
    &\leq \frac{\lceil C/M
      \rceil + 1}{\gamma^{\lceil C/M
      \rceil + 1}} < \infty
  \end{align*}
  for all $\phi \in \F$ and $x \in \X$.
\end{proof}

\begin{thm}
  Suppose Assumption~\ref{asmp:1} holds. Then there exists a
  $\bar{\beta}$-optimal policy for the MDP defined by the HV-AG
  transformation, and every such policy is average-cost optimal for
  the original inventory control problem.
\end{thm}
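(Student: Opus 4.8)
The plan is to recognize the claim as a direct instance of Corollary~\ref{cor:hv-ag-weak}(iii) and to devote the proof to verifying its hypotheses. That corollary requires Assumption~\ref{asmp:ht} to hold with a state $\ell$ that is isolated from $\X$, Assumption~\ref{asmp:w} to hold, and---since the conclusion is the one from part (iii), which inherits the hypotheses of part (ii)---that the one-step cost $c$ be bounded and that $q$ be a transition probability kernel.

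First I would collect the facts that are already established. The preceding proposition shows that Assumption~\ref{asmp:1} implies Assumption~\ref{asmp:ht} with $\ell = 0_L$, and by the construction of the state space $\X = [0, C] \cup \{0_L\}$ this state $0_L$ is isolated from $\X$; this supplies the isolated state the corollary needs. Assumption~\ref{asmp:w} was verified earlier in the section (constant, compact action sets $A(x) \equiv [0, M]$; weak continuity of $q$ obtained from the continuity of $F$ together with the isolation of $0_L$; and lower semicontinuity of $c$), and the boundedness of $c$ on $\X \times \A$ follows from the continuity of $h$ on $[0, C]$, as already noted.

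The one point still to check is that $q$ is a transition probability kernel, i.e.\ that $q(\X | x, a) = 1$ for every $(x, a) \in \X \times \A$. Here I would argue that $F(x, a, \cdot)$ maps every demand value into $\X$: when $x + a \geq s$ one has $\min\{x + a - s, C\} \in [0, C]$, and otherwise $F(x, a, s) = 0_L \in \X$, so no mass escapes and $\int_0^\infty \mathbf{1}\{F(x, a, s) \in \X\}\, dG_D(s) = 1$. With all the hypotheses in hand, Corollary~\ref{cor:hv-ag-weak}(iii) yields both the existence of a $\bar{\beta}$-optimal stationary policy for the MDP defined by the HV-AG transformation and the average-cost optimality of every such policy for the original inventory problem. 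Since every needed ingredient except this last measure-preservation check has already been proved, the proof is essentially one of assembly, and the only substantive step---hence the only possible obstacle---is confirming that the dynamics keep all probability mass inside $\X$.
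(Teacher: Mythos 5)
Your proposal is correct and follows essentially the same route as the paper, whose proof is a one-line appeal to statements (ii) and (iii) of Corollary~\ref{cor:hv-ag-weak}; the hypothesis checks you spell out (Assumption~\ref{asmp:ht} with $\ell = 0_L$ isolated, Assumption~\ref{asmp:w}, boundedness of $c$, and that $q$ is a transition probability kernel because $F$ maps every demand realization into $\X$) are all established earlier in the section or are immediate from the construction. Your explicit verification that no probability mass escapes $\X$ is a reasonable addition the paper leaves implicit.
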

\begin{proof}
  This follows from statements (ii) and (iii) of
  Corollary~\ref{cor:hv-ag-weak}.
\end{proof}

\begin{remark}
  Using the HV transformation and Corollary~\ref{cor:hv-weak}, it can
  be shown that, when Assumption~\ref{asmp:1} holds, the problem of
  minimizing the total cost incurred before the first lost sale can
  also be reduced to a discounted MDP.
\end{remark}

\paragraph{Acknowledgement} Research of the first author was
partially supported by NSF grant CMMI-1636193.

\bibliographystyle{plain}
\bibliography{orl-reduction-refs}

\section*{Appendix}

Let $\mathbb{S}$ be a metric space endowed with its Borel
$\sigma$-algebra $\B(\mathbb{S})$. A sequence $\{\nu_n\}_{n=0}^\infty$
of finite measures on $(\mathbb{S}, \B(\mathbb{S}))$ \emph{converges
  weakly} to a measure $\nu$ if, for every bounded continuous function
$f:\mathbb{S} \rightarrow \R$,
\begin{displaymath}
\lim_{n\rightarrow\infty}\int_\mathbb{S} f(x)~\nu_n(dx) =
\int_\mathbb{S} f(x)~\nu(dx).
\end{displaymath}

\begin{lem}[Dominated Convergence]\label{lem:dct}
  Let $g:\mathbb{S} \rightarrow [0, \infty)$ be a continuous function, and let
  $\{\nu_n\}_{n=0}^\infty$ be a sequence of finite measures on
  $(\mathbb{S}, \B(\mathbb{S}))$ that converges weakly to a measure $\nu$. Suppose
  there exists a continuous function $h$ on $\mathbb{S}$ such that $g \leq h$
  and
  \begin{equation}
    \label{eq:1}
    \lim_{n\rightarrow\infty}\int_\mathbb{S} h(x)~\nu_n(dx) = \int_\mathbb{S} h(x)~\nu(dx) < \infty.
  \end{equation}
  Then
  \begin{equation}
    \label{eq:2}
    \lim_{n\rightarrow\infty}\int_\mathbb{S} g(x)~\nu_n(dx) = \int_\mathbb{S} g(x)~\nu(dx).
  \end{equation}
\end{lem}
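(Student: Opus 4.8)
The plan is to prove the two inequalities
\[
  \liminf_{n\to\infty}\int_{\mathbb{S}} g\,d\nu_n \;\geq\; \int_{\mathbb{S}} g\,d\nu,
  \qquad
  \limsup_{n\to\infty}\int_{\mathbb{S}} g\,d\nu_n \;\leq\; \int_{\mathbb{S}} g\,d\nu,
\]
separately; together they give (\ref{eq:2}). The first is a Fatou-type lower bound valid for \emph{any} nonnegative continuous function, while the second is where the domination hypothesis (\ref{eq:1}) enters.

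First I would establish the auxiliary claim that, for every continuous $f:\mathbb{S}\to[0,\infty)$,
\[
  \liminf_{n\to\infty}\int_{\mathbb{S}} f\,d\nu_n \;\geq\; \int_{\mathbb{S}} f\,d\nu.
\]
To prove it, for each positive integer $m$ set $f_m:=\min\{f,m\}$, which is bounded and continuous, so by the definition of weak convergence $\int_{\mathbb{S}} f_m\,d\nu_n \to \int_{\mathbb{S}} f_m\,d\nu$. Since $f\geq f_m$ and $\nu_n\geq 0$, this yields $\liminf_n\int_{\mathbb{S}} f\,d\nu_n \geq \int_{\mathbb{S}} f_m\,d\nu$; letting $m\to\infty$ and applying the monotone convergence theorem to $f_m\uparrow f$ on the right-hand side gives the claim.

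Applying the claim with $f=g$ gives the first inequality. For the second I would apply the claim to $f=h-g$, which is continuous and nonnegative because $g\leq h$, obtaining $\liminf_n\int_{\mathbb{S}}(h-g)\,d\nu_n \geq \int_{\mathbb{S}}(h-g)\,d\nu$. Here $g\leq h$ forces $\int_{\mathbb{S}} g\,d\nu \leq \int_{\mathbb{S}} h\,d\nu<\infty$, so every integral against $\nu$ is finite, and (\ref{eq:1}) guarantees that $\int_{\mathbb{S}} h\,d\nu_n$ is finite for large $n$ with $\int_{\mathbb{S}} h\,d\nu_n\to\int_{\mathbb{S}} h\,d\nu$. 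Writing $\int_{\mathbb{S}}(h-g)\,d\nu_n=\int_{\mathbb{S}} h\,d\nu_n-\int_{\mathbb{S}} g\,d\nu_n$ and using that the first term converges, I would pass the $\liminf$ through to get
\[
  \int_{\mathbb{S}} h\,d\nu - \limsup_{n\to\infty}\int_{\mathbb{S}} g\,d\nu_n \;\geq\; \int_{\mathbb{S}} h\,d\nu - \int_{\mathbb{S}} g\,d\nu,
\]
and cancel $\int_{\mathbb{S}} h\,d\nu$ to conclude $\limsup_n\int_{\mathbb{S}} g\,d\nu_n\leq\int_{\mathbb{S}} g\,d\nu$.

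I expect the only delicate point to be the auxiliary Fatou claim, specifically the interchange of the limit in $m$ with the limit in $n$, which the truncation-plus-monotone-convergence argument handles cleanly; everything else is bookkeeping with $\liminf$ and $\limsup$. It is also worth noting that taking $f\equiv 1$ in the definition of weak convergence forces $\nu_n(\mathbb{S})\to\nu(\mathbb{S})<\infty$, so the total masses are bounded and all truncated integrals are genuinely finite, making the subtraction $\int_{\mathbb{S}} h\,d\nu_n-\int_{\mathbb{S}} g\,d\nu_n$ legitimate.
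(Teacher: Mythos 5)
Your proof is correct and follows essentially the same route as the paper's: both reduce the claim to a Fatou-type inequality $\int_{\mathbb{S}} f\,d\nu \le \liminf_{n\to\infty}\int_{\mathbb{S}} f\,d\nu_n$ for nonnegative continuous $f$, applied together with hypothesis (\ref{eq:1}) to the function $h-g$ (the paper applies it to $h+g$ for the lower bound where you apply it directly to $g$, an immaterial difference). The only substantive distinction is that the paper cites this Fatou inequality from the literature, whereas you prove it from scratch via the truncations $\min\{f,m\}$ and monotone convergence, which makes the argument self-contained.
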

\begin{proof}
  According to \cite[Theorem~1.1]{feinberg_fatous_2014}, if
  $f:\mathbb{S} \rightarrow [0, \infty)$ is continuous, then
  \begin{align}\label{eq:fatou}
    \int_\mathbb{S} f(x)~\nu(dx) \leq \liminf_{n\rightarrow\infty}\int_\mathbb{S} f(x)~\nu_n(dx).
  \end{align}
  The equality (\ref{eq:2}) then follows by applying (\ref{eq:1}) and
  (\ref{eq:fatou}) to the nonnegative continuous functions $h - g$ and
  $h + g$.
\end{proof}

\end{document}